\tikzstyle{vertex}=[circle ,draw, inner sep=0pt, minimum size=6pt]
\newcommand{\Ac}{\mathcal{A}}
\newcommand{\ZZ}{\mathbb{Z}}
\newcommand{\RR}{\mathbb{R}}
\newcommand{\eb}{\mathbf{e}}
\def\opn#1#2{\def#1{\operatorname{#2}}} 
\opn\conv{conv} \opn\dep{depth} \opn\Spec{Spec} \opn\cone{cone} \opn\ini{in} \opn\codeg{codeg} \opn\deg{deg}
\opn\Graph{Graph} \opn\sign{sign} \opn\Ehr{Ehr} \opn\rank{rank} \opn\type{type} \opn\reg{reg} \opn\core{core}
\opn\Hilb{Hilb} \opn\Indeg{Indeg} \opn\link{link} \opn\Tor{Tor} \opn\MNF{MNF} \opn\dep{depth} \opn\pdim{pdim}
\newtheorem{thm}{Theorem}[section]
\newtheorem{prop}[thm]{Proposition}
\newtheorem{q}[thm]{Question}
\theoremstyle{definition}
\newtheorem{ex}[thm]{Example}
\theoremstyle{remark}
\newtheorem{rem}[thm]{Remark}
\begin{document}

\title{A new class of magic positive Ehrhart polynomials of reflexive polytopes}
\author{Masato Konoike}

\address[M. Konoike]{Department of Pure and Applied Mathematics, 
Graduate School of Information Science and Technology, 
Osaka University, Japan}
\email{kounoike-m@ist.osaka-u.ac.jp}

\subjclass{Primary: 52B12, Secondary: 05A15, 52B20} 
\keywords{Stasheff polytope, Symmetric edge polytope, magic positive}

\maketitle

\begin{abstract} 
The magic positivity of Ehrhart polynomials is a useful tool for proving the real-rootedness of the $h^\ast$-polynomials. 
In this paper, we provide a new class of reflexive polytopes whose Ehrhart polynomials are magic positive. 
First, we prove that the Ehrhart polynomials of Stasheff polytopes are magic positive. 
Second, we provide a partial proof of the magic positivity of the Ehrhart polynomials of the dual polytopes of the symmetric edge polytopes of cycles.
\end{abstract}

\section{Introduction}
A \textit{lattice polytope} is the convex hull of finitely many elements in a lattice contained in $\mathbb{R}^d$, typically $\mathbb{Z}^d$. A lattice polytope $P$ is called \textit{reflexive} if
$$P^\ast := \{ y \in \RR^d \mid \langle x, y\rangle \leq 1 \text{ for any } x \in P\}$$
is also a lattice polytope, where $\langle \cdot, \cdot \rangle$ denotes the usual inner product of $\RR^d$. 

By a theorem of Ehrhart \cite{E}, $|nP\cap \mathbb{Z}^d|$ is given by a polynomial $E_P(n)$ of degree $\dim P$ in $n$ for all integers $n > 0$. The polynomial $E_P(n)$ is called the \textit{Ehrhart polynomial} of $P$. The \textit{$h^\ast$-polynomial} $h^\ast _P (t)=h_0^\ast +h_1^\ast t+\cdots +h_d^\ast t^d$ of a $d$-dimensional lattice polytope $P$ encodes the Ehrhart polynomial in a particular basis consisting of binomial coefficients:
$$E_P (n)=h_0^\ast \binom{n+d}{d} +h_1^\ast \binom{n+d-1}{d} +\cdots + h_d^\ast \binom{n}{d}.$$
A fundamental theorem by Stanley \cite{RS80} states that the coefficients of the $h^\ast$-polynomial are always nonnegative integers. It was proved by Hibi~\cite{H} that a $d$-dimensional lattice polytope $P$ is reflexive if and only if its $h^\ast$-polynomial is palindromic and has degree $d$, that is, $h^\ast _P (t)=t^dh^\ast _P \left(\frac{1}{t}\right)$.

Consider the polynomial $f(n)$ expressed in a different basis:
$$f(n) = \sum_{i=0}^d a_i n^i (1+n)^{d-i}.$$
If $a_0,\ldots,a_d\geq 0$, then $f(n)$ is said to be \textit{magic positive}. The term `magic positive` was introduced by Ferroni and Higashitani \cite{FH}. A common area of study is whether $h^\ast$-polynomials are real-rooted. See Section 2 for the real-rootedness of polynomials. The motivation for studying the magic positivity comes from the following theorem.

\begin{thm}[{\cite[Theorem 4.19]{FH}}]
Let $P$ be a lattice polytope of dimension $d$. Consider the Ehrhart polynomial written in a different basis:
$$E_P(n) = \sum_{i=0}^d a_i n^i (1+n)^{d-i}.$$
If $a_0,\ldots,a_d\geq 0$, then we have simultaneously that $E_P(n)$ has non-negative coefficients and $h^*_P(n)$ is real-rooted.
\end{thm}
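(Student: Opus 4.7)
The plan is to prove the two conclusions separately. The first, non-negativity of the coefficients of $E_P(n)$ in the monomial basis, is immediate from the binomial theorem: for each $i$,
\[
a_i\, n^i (1+n)^{d-i} \;=\; \sum_{k=0}^{d-i} a_i \binom{d-i}{k}\, n^{i+k}
\]
has non-negative coefficients when $a_i \geq 0$, and summing over $i$ preserves non-negativity.

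For the real-rootedness of $h^*_P(t)$, the strategy is to push the magic-basis expansion of $E_P$ through the basis change to the $h^*$-basis. Define, for $i = 0, \ldots, d$,
\[
P_i(t) \;:=\; (1-t)^{d+1} \sum_{n \geq 0} n^i (1+n)^{d-i}\, t^n,
\]
which is a polynomial in $t$ of degree at most $d$; morally, $P_i$ is the ``$h^*$-polynomial'' of the magic basis element $n^i(1+n)^{d-i}$. By the standard identity $\sum_{n\geq 0} E_P(n)\, t^n = h^*_P(t)/(1-t)^{d+1}$ and linearity,
\[
h^*_P(t) \;=\; \sum_{i=0}^d a_i\, P_i(t).
\]
Thus $h^*_P(t)$ is a non-negative real combination of the fixed polynomials $P_0,\ldots,P_d$. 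By the Chudnovsky--Seymour theorem on compatible polynomials, the real-rootedness of every such non-negative combination reduces to proving that the family $\{P_0,\ldots,P_d\}$ is \emph{pairwise compatible}, i.e.\ that $\alpha P_i(t) + \beta P_j(t)$ is real-rooted for all $\alpha,\beta \geq 0$ and all $i,j$; equivalently, that these polynomials admit a common real-rooted interleaver.

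The main obstacle is this pairwise compatibility, a statement independent of $P$ and purely combinatorial-analytic. Direct computation in low degree (for example, $P_0(t) = A_d(t)$, the classical Eulerian polynomial, and small cases in $d=2,3$ show each $P_i$ is real-rooted with non-positive roots) suggests that every $P_i(t)$ has an Eulerian-type description. The concrete route I would pursue is to establish a closed form for $P_i(t)$---for instance as a weighted count of permutations $w \in S_d$ by descent statistic, with a weight depending on $i$ reflecting the combinatorial meaning of $n^i(1+n)^{d-i}$ as counting functions $[d] \to \{0,\ldots,n\}$ that are nonzero on a fixed $i$-subset---and then to exhibit a single real-rooted polynomial that interleaves every $P_i$ simultaneously. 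Such a common interleaver would supply all pairwise-compatibility conditions at once and complete the proof via Chudnovsky--Seymour.
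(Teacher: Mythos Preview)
The paper does not prove this statement at all: it is quoted verbatim from \cite[Theorem~4.19]{FH} as background, with no argument supplied. So there is nothing in the present paper to compare your proposal against.

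On the substance of your proposal: the first conclusion (non-negativity of the coefficients of $E_P$) is handled correctly and completely by your binomial-expansion argument. For the second conclusion, your reduction is sound---writing $h^*_P(t)=\sum_i a_i P_i(t)$ with $P_i(t)=(1-t)^{d+1}\sum_{n\ge 0} n^i(1+n)^{d-i}t^n$ is exactly the right linear-algebra move, and invoking a compatibility/interlacing criterion (Chudnovsky--Seymour, or more classically the fact that a non-negative combination of an interlacing sequence is real-rooted) is the natural strategy. Your small computations are also correct: for instance $P_0=A_d$ and $P_d=tA_d$.

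The genuine gap is that the entire weight of the theorem sits in the step you explicitly leave undone. Saying ``the concrete route I would pursue is to establish a closed form for $P_i(t)$ \ldots\ and then to exhibit a single real-rooted polynomial that interleaves every $P_i$'' is a plan, not a proof. Establishing that $(P_0,\ldots,P_d)$ is an interlacing sequence (which is what the argument in \cite{FH} comes down to) is the nontrivial analytic content here; everything prior to it is formal. Until that interlacing is actually proved---e.g.\ via a recursion of the form $P_i(t)=tP_{i-1}(t)+(\text{something interlacing-preserving})$, or by identifying $P_i$ with an $h^*$-polynomial of a half-open box and invoking known results---your argument for real-rootedness is incomplete. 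Pairwise compatibility alone, without a concrete mechanism (common interleaver, recursion, or operator preserving real-rootedness), is not something one can simply assert.
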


Therefore, the following question naturally arises.
\begin{q}
When is the Ehrhart polynomial of lattice polytopes magic positive?
\end{q}
It is known that the Ehrhart polynomials of zonotopes \cite{BJM} and Pitman-Stanley polytopes \cite{FAM} are magic positive.

The CL-ness of cross polytopes, dual of the Stasheff polytopes, root polytopes of type A, and root polytopes of type C has been studied in \cite{HKM}. Here, a reflexive polytope $P$ is said to be a \textit{CL-polytope} if all of the complex roots of the Ehrhart polynomial $E_P(n)$ lie on the line $\mathrm{Re}(z) = -\frac{1}{2}$, where $\mathrm{Re}(z)$ denotes the real part of $z$. Since the dual of these three polytopes, except for the dual of the Stasheff polytopes are zonotopes \cite{CM}, their Ehrhart polynomials are magic positive. However, since Stasheff polytopes are not zonotopes, it is not immediately clear whether their Ehrhart polynomials are magic positive. The answer to this question is presented in the following first main theorem. 
\begin{thm}\label{main1}
The Ehrhart polynomials of Stasheff polytopes are magic positive.
\end{thm}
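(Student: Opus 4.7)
The plan is to start from the known Ehrhart expansion $E_P(n)=\sum_{k=0}^{d}h_k^{\ast}\binom{n+d-k}{d}$, where $P$ denotes the $d$-dimensional Stasheff polytope and $h_k^{\ast}=\frac{1}{d+1}\binom{d+1}{k}\binom{d+1}{k+1}$ is the Narayana number. The first step is to invoke the classical $\gamma$-nonnegativity of the Narayana polynomial, $h^\ast(t)=\sum_{j=0}^{\lfloor d/2\rfloor}\gamma_j\, t^j(1+t)^{d-2j}$ with explicitly nonnegative $\gamma_j=\frac{1}{j+1}\binom{d}{2j}\binom{2j}{j}$. Substituting this expansion and applying the Vandermonde-type identity $\sum_{\ell}\binom{m}{\ell}\binom{n+m'-\ell}{d}=[z^d](1+z)^{n+m'-m}(2+z)^{m}$ yields the compact formula
$$E_P(n)=\sum_{j=0}^{\lfloor d/2\rfloor}\gamma_j\,F_j(n),\qquad F_j(n):=[z^d](1+z)^{n+j}(2+z)^{d-2j}.$$

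The second step is to change basis from $\{F_j\}$ to $\{n^i(n+1)^{d-i}\}_i$. Expanding $(2+z)^{d-2j}=((1+z)+1)^{d-2j}$ gives $F_j(n)=\sum_{r=0}^{d-2j}\binom{d-2j}{r}\binom{n+j+r}{d}$, and each binomial $\binom{n+s}{d}=\frac{1}{d!}\prod_{t=0}^{d-1}(n+s-t)$ can be written explicitly in the magic basis via the linear substitution $n+c=c(n+1)+(1-c)n$ applied factor-by-factor. This produces explicit coefficients $a_i^{(j)}$ with $F_j(n)=\sum_i a_i^{(j)}\, n^i(n+1)^{d-i}$, so the theorem reduces to proving $a_i:=\sum_j\gamma_j\,a_i^{(j)}\ge 0$ for every $i$.

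The main obstacle is precisely this final positivity step, because the individual $F_j$ are \emph{not} magic positive. Direct computation shows that $a_i^{(0)}<0$ for middle $i$ as soon as $d\ge 3$, so nonnegativity of $a_i$ emerges only through cancellation with the Narayana-weighted contributions of $F_j$ for $j\ge 1$. Sample computations for $d\le 3$ produce the magic coefficients $(a_i)=(1,1),\,(1,\tfrac{1}{2},1),\,(1,\tfrac{1}{6},\tfrac{1}{6},1)$, all nonnegative and palindromic as required by reflexivity, suggesting the cancellations are sharp. To complete the proof I would try either a direct hypergeometric identity collapsing $\sum_j\gamma_j\,a_i^{(j)}$ to a manifestly nonnegative closed form, or a combinatorial interpretation of $a_i$ as a weighted count of decorated dissections of a convex $(d+3)$-gon that exploits the recursive face structure of Stasheff polytopes (faces are products of smaller Stasheff polytopes) to induct on $d$. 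Establishing such an identity or bijection is the crux of the argument.
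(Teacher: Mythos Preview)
Your proposal has two serious problems, one of substance and one of completeness.

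\textbf{Wrong $h^\ast$-vector.} You begin by asserting that the $h^\ast$-vector of the $d$-dimensional Stasheff polytope is the Narayana sequence $h_k^\ast=\frac{1}{d+1}\binom{d+1}{k}\binom{d+1}{k+1}$. For the polytope actually under discussion this is false. In the paper, $\mathrm{St}_d$ is defined as the convex hull of $\{\pm\eb_i\}\cup\{\eb_i+\cdots+\eb_j\}$ and is the \emph{dual} of the associahedron; the Stasheff polytope is $\mathrm{St}_d^\ast$, and the theorem and its proof concern $E_{\mathrm{St}_d^\ast}(n)$. The Narayana numbers are the $h^\ast$-vector of $\mathrm{St}_d$, not of $\mathrm{St}_d^\ast$. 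This is visible already in your own sample computations: you obtain magic coefficients $(1,\tfrac12,1)$ and $(1,\tfrac16,\tfrac16,1)$ for $d=2,3$, whereas the paper's table gives $(1,\tfrac32,1)$ and $(1,2,2,1)$. Equivalently, for $d=2$ the paper's Ehrhart polynomial is $\tfrac72 n^2+\tfrac72 n+1$ with $h^\ast=(1,5,1)$, not the Narayana $(1,3,1)$. So the entire analytic setup---the $\gamma$-expansion, the functions $F_j$, the reduction to $\sum_j\gamma_j a_i^{(j)}\ge 0$---is built on the wrong generating data.

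\textbf{The argument is not a proof.} Independently of the above, you explicitly stop at the crux: you observe that the $F_j$ are not individually magic positive and then say you would ``try either a direct hypergeometric identity \ldots\ or a combinatorial interpretation,'' without producing either. That is a plan, not a proof.

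The paper's route is entirely different and avoids any closed form for $E_{\mathrm{St}_d^\ast}$ or its $h^\ast$-vector. It first establishes, by a direct lattice-point count comparing $\mathrm{St}_d^\ast$ with $\mathrm{St}_{d-1}^\ast\times[-1,1]$, the three-term recurrence
\[
E_{\mathrm{St}_d^\ast}(n)=(2n+1)\,E_{\mathrm{St}_{d-1}^\ast}(n)-\tfrac12 n(n+1)\,E_{\mathrm{St}_{d-2}^\ast}(n),
\]
then rewrites it as
\[
E_{\mathrm{St}_d^\ast}(n)-\tfrac12 n\,E_{\mathrm{St}_{d-1}^\ast}(n)=\tfrac12 n\,E_{\mathrm{St}_{d-1}^\ast}(n)+(n+1)\Bigl(E_{\mathrm{St}_{d-1}^\ast}(n)-\tfrac12 n\,E_{\mathrm{St}_{d-2}^\ast}(n)\Bigr),
\]
so that magic positivity of $E_{\mathrm{St}_{d-1}^\ast}$ and of $E_{\mathrm{St}_{d-1}^\ast}-\tfrac12 n\,E_{\mathrm{St}_{d-2}^\ast}$ propagates to the next step. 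The base cases $d=0,1$ are immediate.
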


We also study the magic positivity of the Ehrhart polynomials of the dual of symmetric edge polytopes. The result of this study is presented in the following second main theorem.
\begin{thm}\label{main2}
We transform $E_{P_{C_{d+1}}^\ast}(n)$ into the form $E_{P_{C_{d+1}}^\ast}(n) = \sum _{j=0}^da_jn^j(1+n)^{d-j}$. Then, the coefficients $a_i$ and $ a_{d-i}$ for $i = 0, 1, 2$ are positive. Here, $P_{C_{d+1}}$ represents symmetric edge polytopes of cycles.
\end{thm}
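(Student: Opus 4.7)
\emph{Plan.} The $(d+1)$ polynomials $n^{j}(1+n)^{d-j}$, $0\le j\le d$, form a basis of the space of polynomials of degree at most $d$, so the expansion defining the $a_{j}$ is unique. Under the substitution $n\mapsto -1-n$ this basis is permuted up to a global sign, namely $n^{j}(1+n)^{d-j}\mapsto (-1)^{d}\,n^{d-j}(1+n)^{j}$. Combined with the Ehrhart reciprocity $E_{P_{C_{d+1}}^{\ast}}(-1-n)=(-1)^{d}E_{P_{C_{d+1}}^{\ast}}(n)$, which holds because $P_{C_{d+1}}^{\ast}$ is reflexive, this yields $a_{j}=a_{d-j}$ for every $j$. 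It therefore suffices to prove $a_{0},a_{1},a_{2}>0$.

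Writing $E_{P_{C_{d+1}}^{\ast}}(n)=\sum_{m}c_{m}n^{m}$, expanding $(1+n)^{d-j}$, and collecting powers of $n$ produces the lower-triangular system $c_{m}=\sum_{j=0}^{m}a_{j}\binom{d-j}{m-j}$, which inverts to
\begin{align*}
a_{0} &= c_{0},\\
a_{1} &= c_{1}-d\,c_{0},\\
a_{2} &= c_{2}-(d-1)c_{1}+\binom{d}{2}c_{0}.
\end{align*}
Since $P_{C_{d+1}}^{\ast}$ is reflexive, $c_{0}=E_{P_{C_{d+1}}^{\ast}}(0)=1$, so $a_{0}=1>0$ immediately. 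For $a_{1}$ and $a_{2}$, I would substitute the explicit closed form of $E_{P_{C_{d+1}}^{\ast}}(n)$ that should be established or recalled in the preceding sections of the paper, read off $c_{1}$ and $c_{2}$, and verify the resulting elementary inequalities in $d$. Because the $h^{\ast}$-polynomial of the symmetric edge polytope of a cycle is known to depend on the parity of $d+1$, the computation will likely split into an odd and an even case.

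The main obstacle will be the verification of $a_{2}>0$. Whereas $c_{0}=1$ is automatic and $c_{1}$ can be interpreted through the $(d-1)$-dimensional faces, $c_{2}$ has no clean combinatorial meaning and has to be extracted by direct algebraic manipulation of the closed Ehrhart formula. One then needs to control the (potentially large) cancellation in $c_{2}-(d-1)c_{1}+\binom{d}{2}$ and show that the result is still positive, which appears to force one into binomial-identity juggling and parity-dependent estimates rather than a conceptual argument. The restriction of Theorem~\ref{main2} to $i\le 2$ is very much in line with this difficulty: each additional $a_{i}$ requires one more delicate binomial identity, so pushing past $i=2$ would presumably demand a new structural insight instead of a direct continuation of the present method.
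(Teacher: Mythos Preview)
Your reduction to showing $a_{0},a_{1},a_{2}>0$ via the reflexivity symmetry $a_{j}=a_{d-j}$ is correct and matches what the paper does (it simply cites this as Proposition~2.4). Your formulas $a_{1}=c_{1}-dc_{0}$ and $a_{2}=c_{2}-(d-1)c_{1}+\binom{d}{2}c_{0}$ are also fine.

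However, the proposal stops precisely where the work begins: you write that you ``would substitute the explicit closed form \ldots\ read off $c_{1}$ and $c_{2}$, and verify the resulting elementary inequalities,'' and you anticipate ``binomial-identity juggling and parity-dependent estimates,'' but none of this is carried out. As written, this is an outline, not a proof; the positivity of $a_{1}$ and $a_{2}$ is asserted rather than established.

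The paper's proof avoids computing $c_{1},c_{2}$ altogether via a key identity you are missing. From Proposition~4.4,
\[
E_{P_{C_{d+1}}^{\ast}}(n)=\sum_{i=0}^{\lfloor d/2\rfloor}(-1)^{i}\binom{d+1}{i}\binom{(d+1-2i)n+(d-i)}{d},
\]
and each linear factor in the falling product can be rewritten in the magic basis directly:
\[
\binom{(d+1-2i)n+(d-i)}{d}=\frac{1}{d!}\prod_{j=1}^{d}\bigl((j-i)n+(d+1-j-i)(n+1)\bigr).
\]
Expanding this product already gives the coefficients in the basis $n^{k}(1+n)^{d-k}$ as sums over $k$-subsets $I\subset[d]$: one obtains $a_{k}=\sum_{I\in\binom{[d]}{k}}C_{I}$ with explicit $C_{I}$, and the paper proves $C_{I}>0$ for every $I$ of size $0,1,2$ by short closed-form calculations (for instance $C_{\{i\}}=\frac{i}{d+1-i}$ and, in the hardest subcase, $C_{\{i,j\}}=\frac{ij}{(d+1-i)(d+1-j)}$). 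No parity split is needed. This termwise positivity is the structural idea that your plan lacks, and it is also what explains the barrier at $i=3$: the paper notes that some individual $C_{I}$ become negative for $|I|=3$, so the termwise method genuinely breaks there, rather than merely becoming more laborious.
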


This paper is organized as follows. 
In Section~\ref{sec:pre}, we prepare the materials for the proofs of the main results. 
In Section~\ref{sec:main1}, we provide a proof of Theorem~\ref{main1}. 
In Section~\ref{sec:main2}, we investigate the magic positivity of the Ehrhart polynomials of the dual of symmetric edge polytopes and provide a proof of Theorem~\ref{main2}. 

\subsection*{Acknowledgements}
The author would like to thank Akihiro Higashitani for his helpful comments and advice on improving this paper.

\medskip

\section{Preliminaries}\label{sec:pre}

In this section, we recall several materials used in this paper. 

\subsection{Polynomial real-rooted}
A polynomial $f=\sum_{i=0} ^d a_i t^i$ of degree $d$ with real coefficients is said to be \textit{real-rooted}, if all its roots are real. If all the coefficients of a real-rooted polynomial are nonnegative, or equivalently, if all its roots are nonpositive, then $a_i^2\geq a_{i-1}a_{i+1}$ for all $i$~\cite{RS89}. A sequence $a_i$ of coefficients satisfying these inequalities is called \textit{log-concave}. An immediate consequence is that the nonnegative, log-concave sequence is \textit{unimodal}, meaning $a_0\leq a_1\leq \cdots \leq a_k \geq \cdots \geq a_d$ for some $k$.

\subsection{Stasheff polytope}
Let $\mathrm{St}_d$ be the convex hull of $\{ \pm \eb_i : 1 \leq i \leq d\} \cup \{\eb_i+\cdots+\eb_j : 1 \leq i < j \leq d\}$. 
Then $\mathrm{St}_d$ is a reflexive polytope of dimension $d$. 
This polytope is the dual polytope of the so-called  \textit{Stasheff polytope} (\textit{associahedron}). 
For more detailed information, see, e.g., \cite{FZ}.

\subsection{Symmetric edge polytope}
Let $G$ be a finite simple graph on the vertex set $[d]$ and the edge set $E(G)$. The \textit{symmetric edge polytope} $P_G \subset \RR^d$ is the convex hull of the set
$$\{\pm(e_v-e_w) \in \RR^d \mid vw \in E(G)\}$$
Here, the vectors $e_v$ are elements that form a lattice basis of $\ZZ^d$. It is known that \cite{OH14} $P_G$ is reflexive. For more context on symmetric edge polytopes, see e.g. \cite{HJM, MHNOH}.

\subsection{Magic positive}
We use the following proposition to prove Theorem~\ref{main2}.
\begin{prop}[{\cite[Proposition 4.11]{BJM}}]
Let $P$ be a $d$-dimensional lattice polytope and let 
$$E_{P}(n) = \sum _{i=0}^da_in^i(1+n)^{d-i}$$
be its Ehrhart polynomial. Then $P$ is reflexive if and only if $a_j=a_{d-j}$.
\end{prop}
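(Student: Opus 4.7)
The plan is to reduce the statement to the classical functional equation characterizing reflexivity and then compare coefficients in the basis $\{n^j(1+n)^{d-j}\}$.

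First, I would observe that the Hibi characterization stated in the introduction is equivalent to the Ehrhart-polynomial identity
$$E_P(-n-1) = (-1)^d E_P(n).$$
This reformulation is standard: expanding $E_P(n) = \sum_{i=0}^d h^\ast_i \binom{n+d-i}{d}$ and applying the reciprocity $\binom{-m}{d} = (-1)^d \binom{m+d-1}{d}$ yields $E_P(-n-1) = (-1)^d \sum_i h^\ast_{d-i}\binom{n+d-i}{d}$, and linear independence of the binomial basis converts this into the palindromic condition $h^\ast_i = h^\ast_{d-i}$ for all $i$.

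Next I would substitute $n \mapsto -n-1$ into $E_P(n) = \sum_i a_i n^i(1+n)^{d-i}$ and simplify:
$$E_P(-n-1) \;=\; \sum_{i=0}^d a_i(-n-1)^i(-n)^{d-i} \;=\; (-1)^d \sum_{j=0}^d a_{d-j}\, n^j(1+n)^{d-j},$$
after reindexing by $j = d-i$. Combined with the functional equation from the previous step, this produces
$$\sum_{j=0}^d a_j\, n^j(1+n)^{d-j} \;=\; \sum_{j=0}^d a_{d-j}\, n^j(1+n)^{d-j}.$$

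It remains to verify that the $d+1$ polynomials $n^j(1+n)^{d-j}$, $0 \leq j \leq d$, are linearly independent and therefore form a basis of the space of real polynomials of degree at most $d$. A short induction suffices: in any relation $\sum c_j n^j(1+n)^{d-j} = 0$, evaluating at $n=0$ forces $c_0 = 0$ and at $n = -1$ forces $c_d = 0$, after which the remaining terms share a factor of $n(1+n)$ whose removal reduces the problem to the same statement in dimension $d-2$. With this basis property in hand, the displayed identity becomes $a_j = a_{d-j}$ for every $j$, giving both directions of the proposition in one stroke. The only nontrivial ingredient is this basis fact, but it is completely elementary; the rest is a direct substitution combined with the Hibi theorem already quoted in the introduction, so no serious obstacle is expected.
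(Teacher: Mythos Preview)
Your argument is correct: the substitution $n\mapsto -n-1$ turns the expansion $\sum_i a_i n^i(1+n)^{d-i}$ into $(-1)^d\sum_j a_{d-j}n^j(1+n)^{d-j}$, and combining this with the functional equation $E_P(-n-1)=(-1)^dE_P(n)$ (equivalent, via the computation you sketch, to Hibi's palindromicity criterion) yields $a_j=a_{d-j}$ once the basis property is checked. Your linear-independence argument is fine; an even quicker route is to note that the lowest-degree monomial of $n^j(1+n)^{d-j}$ is $n^j$, so the transition matrix to the standard basis $1,n,\dots,n^d$ is unitriangular.

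There is nothing in the present paper to compare against: the proposition is quoted from \cite{BJM} without proof, so the paper relies on that reference rather than supplying its own argument. Your self-contained derivation is therefore a genuine addition rather than a reproduction of anything in the text.
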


We explain why the dual of cross polytopes, and root polytopes of type A and type C have Ehrhart polynomials that are magic positive.

\begin{ex}[Dual of the Cross Polytope]\label{cross}
Let $\mathrm{Cr}_d$ be the convex hull of $\{\pm \eb_i \mid 1 \leq i \leq d\}$. 
Then $\mathrm{Cr}_d$ is a reflexive polytope of dimension $d$, called the \textit{cross polytope}.
Since dual polytopes of $\mathrm{Cr}_d$ and $[-1,1]^d$ are equal, the Ehrhart polynomial of dual polytopes of $\mathrm{Cr}_d$ can be computed as follows: 
$$E_{\mathrm{Cr}_d^\ast}(n)=(2n+1)^d=\sum_{i=0}^{d}\binom{d}{i}(n+1)^{d-i}n^i.$$
Therefore $E_{\mathrm{Cr}_d^\ast}(n)$ is magic positive.
\end{ex}

\begin{ex}[Dual of the Root Polytope of Type A]\label{typeA}
Let ${\bf A}_d$ be the convex hull $\{\pm \eb_i \mid 1 \leq i \leq d\} \cup \{ \pm (\eb_i+\cdots+\eb_j) \mid 1 \leq i < j \leq d\}$. 
Then ${\bf A}_d$ is a reflexive polytope of dimension $d$, called the \textit{root polytope of type A}. 
The Ehrhart polynomial of dual polytope of ${\bf A}_d$ is calculated in \cite[Lemma 5.3]{HKM} as follows:
$$E_{{\bf A}_d^\ast}(n)=\sum_{k=0}^d \binom{d+1}{k}n^k=\sum_{i=0}^d (n+1)^{d-i}n^i.$$
Therefore $E_{{\bf A}_d^\ast}(n)$ is magic positive.
\end{ex}

\begin{ex}[Dual of the Root Polytope of Type C]\label{typeC}
Let ${\bf C}_d$ be the convex hull of $\{\pm \eb_i \mid 1 \leq i \leq d \} \cup \{ \pm (\eb_i+\cdots+\eb_{j-1}) \mid 1 \leq i < j \leq d \} \cup \{\pm(2\eb_i+\cdots+2\eb_{d-1}+\eb_d) \mid 1 \leq i \leq d-1\}$. 
Then ${\bf C}_d$ is a reflexive polytope of dimension $d$, called the \textit{root polytope of type C}. 
The Ehrhart polynomial of dual polytope of ${\bf C}_d$ is calculated in \cite[Theorem 1.1]{HY} as follows: 
$$E_{{\bf C}_d^\ast}(n)=(n+1)^d + n^d.$$
From this indication, it is clear that $E_{{\bf C}_d^\ast}(n)$ is magic positive.
\end{ex}

\section{Proof of Theorem~\ref{main1}}\label{sec:main1}
In this section, we give a proof of Theorem~\ref{main1}. 
For the proof of Theorem~\ref{main1}, we use the following theorem.
\begin{thm}
For $d \geq 2, E_{\mathrm{St}_d}(n)$ satisfies the following recurrence:
\begin{align}
  E_{\mathrm{St}_d^\ast}(n) = (2n+1)E_{\mathrm{St}_{d-1}^\ast}(n) - \frac{1}{2}n(n+1)E_{\mathrm{St}_{d-2}^\ast}(n).
\end{align}
\end{thm}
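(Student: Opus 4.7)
My approach is to encode lattice points in $n \mathrm{St}_d^\ast$ as integer walks and derive the recurrence from a short transfer-matrix identity.

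First, I would reformulate the counting problem. The defining inequalities of $\mathrm{St}_d^\ast$ are $\sum_{k=i}^j y_k \leq 1$ for $1 \leq i \leq j \leq d$ and $y_i \geq -1$. The substitution $z_0 = 0$ and $z_i = y_1 + \cdots + y_i$ gives a bijection between $n \mathrm{St}_d^\ast \cap \mathbb{Z}^d$ and integer sequences $(z_0, z_1, \ldots, z_d)$ with $z_0 = 0$, $z_b - z_a \leq n$ for all $0 \leq a < b \leq d$, and $z_j - z_{j-1} \geq -n$ for all $j$. Thus $E_{\mathrm{St}_d^\ast}(n)$ counts such walks.

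Next, I would reduce the count to powers of an $(n+1) \times (n+1)$ transfer matrix. Letting $m_j = \min(z_0, \ldots, z_j)$, the permissible range of $z_{j+1}$ is $[z_j - n,\, m_j + n]$, which depends on $(z_j, m_j)$ only through the gap $s_j = z_j - m_j \in \{0, 1, \ldots, n\}$. A case analysis on whether the next step lowers the running minimum produces the transfer matrix $T$ on $\{0, 1, \ldots, n\}$ with entries $T_{s, 0} = n - s + 1$ and $T_{s, s'} = 1$ for $s' \in \{1, \ldots, n\}$. Since $s_0 = 0$, we obtain $E_{\mathrm{St}_d^\ast}(n) = \mathbf{e}_0^{\top} T^d \mathbf{1}$, where $\mathbf{1}$ is the all-ones column vector.

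The recurrence then falls out of a short vector identity. Setting $\mathbf{s} = (0, 1, 2, \ldots, n)^{\top}$, direct row- and column-sums give
\[
T \mathbf{1} = (2n+1)\mathbf{1} - \mathbf{s} \qquad \text{and} \qquad T \mathbf{s} = \tfrac{n(n+1)}{2}\mathbf{1},
\]
so that $T^2 \mathbf{1} = (2n+1)\, T \mathbf{1} - \tfrac{n(n+1)}{2}\mathbf{1}$. Multiplying on the left by $\mathbf{e}_0^{\top} T^{d-2}$ (for $d \geq 2$) and using $E_{\mathrm{St}_k^\ast}(n) = \mathbf{e}_0^{\top} T^k \mathbf{1}$ gives the stated three-term recurrence.

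The main obstacle I anticipate is the state reduction in the second step: one must verify that $s_j$ is a sufficient statistic, so that $T$ is well-defined independently of $(m_j, z_j)$ individually, and correctly tally the transition weights, including the ``collapse to state $0$'' contribution of $n - s$ extra paths arising when $z_{j+1}$ falls below the running minimum. Once $T$ is in hand, the recurrence is immediate from elementary linear algebra.
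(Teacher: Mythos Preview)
Your transfer-matrix argument is correct. The state reduction works exactly as you describe: since the constraints on $z_{j+1}$ are $z_j - n \leq z_{j+1} \leq m_j + n$, the gap $s_j = z_j - m_j$ alone determines both the number of admissible next steps and how they distribute over the new state $s_{j+1}$, and your entry formulas $T_{s,0} = n-s+1$, $T_{s,s'} = 1$ for $s' \geq 1$ check out (the ``collapse to $0$'' count is $n-s$ from the values $z_{j+1}\in[z_j-n,m_j-1]$, plus one from $z_{j+1}=m_j$). The identities $T\mathbf{1} = (2n+1)\mathbf{1} - \mathbf{s}$ and $T\mathbf{s} = \tfrac{n(n+1)}{2}\mathbf{1}$ then yield the recurrence in one line.

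This is a genuinely different route from the paper's. The paper argues geometrically: it embeds $n\mathrm{St}_d^\ast$ in the prism $n(\mathrm{St}_{d-1}^\ast \times [-1,1])$, whose lattice-point count is $(2n+1)E_{\mathrm{St}_{d-1}^\ast}(n)$, and then shows by an explicit set partition (indexed by the maximal tail sum $p = \max_i(x_i+\cdots+x_{d-2})$) that the excess lattice points biject with $(n\mathrm{St}_{d-2}^\ast \cap \ZZ^{d-2})$ times an open triangle of $\tfrac{n(n+1)}{2}$ points in the last two coordinates. Your approach trades this bijective partition for an algebraic one: the same information is packaged in the column structure of $T$, and the recurrence falls out of a two-line calculation rather than a case analysis. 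The paper's method makes the factor $\tfrac{n(n+1)}{2}$ visible as a literal lattice triangle; yours explains it as the constant value of $T\mathbf{s}$. Your argument is shorter and would adapt readily to other polytopes whose duals admit a similar running-extremum encoding, while the paper's stays closer to the facet description and makes the inclusion $\mathrm{St}_d^\ast \subset \mathrm{St}_{d-1}^\ast \times [-1,1]$ do the work.
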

\begin{proof}
First, we define some sets. By the definition of the Stasheff polytope,
\[
\mathrm{St}_d^\ast = \{(x_1, \ldots, x_d) \in \RR^d \mid -1 \leq x_i \leq 1, x_j + \cdots + x_k \leq 1 (1 \leq i \leq d, 1 \leq j < k \leq d)\},
\]
\[
\mathrm{St}_{d-1}^\ast \times [-1, 1] = \{(x_1, \ldots, x_d) \in \RR^d \mid -1 \leq x_i \leq 1, x_j + \cdots + x_k \leq 1 (1 \leq i \leq d, 1 \leq j < k \leq d-1)\}.
\]
Let
$$\Ac = (\mathrm{St}_{d-1}^\ast \times [-1, 1]) \backslash \mathrm{St}_d^\ast.$$
For integers $m$ with $-n \leq m \leq n$, let
\[
A_m = \left(\bigcup_{i = 1}^{d - 2}(n\mathrm{St}_{d-2}^\ast \cap \{(x_1, \ldots, x_{d-2}) \in \RR^{d-2} \mid x_i + \cdots + x_{d-2} = m\})\right) \backslash \bigcup_{i = m+1}^{n}A_i,
\]
\[
\Delta_m = \{(x_{d-1}, x_d) \in \RR^2 \mid x_{d-1} \leq n-\max\{m, 0\}, x_d \leq n, x_{d-1} + x_d > n -\max\{m, 0\}\}.
\]
The following equation can be easily verified:
\begin{align}
  E_{\mathrm{St}_{d-1}^\ast \times [-1, 1]}(n) = (2n+1)E_{\mathrm{St}_{d-1}^\ast}(n).
\end{align}
Since $\mathrm{St}_d^\ast \subset \mathrm{St}_{d-1}^\ast \times [-1, 1]$, by (3.2)
\begin{align}
  \left|n\Ac \cap \ZZ^d\right| = (2n+1)E_{\mathrm{St}_{d-1}^\ast}(n) - E_{\mathrm{St}_d^\ast}(n).
\end{align}
Since $\Delta_m$ and $\{(x,y) \in \RR^2 \mid x \geq 0, y \geq 0, x+y < n\}$ are unimodularly equivalent, we have the following equation:
\begin{align}
  |\Delta_m \cap \ZZ^2| = \frac{1}{2}n(n+1).
\end{align}

\smallskip

\noindent
\underline{The first step}: We prove the following equality:
\begin{align}
  \bigsqcup_{-n \leq m \leq n}A_m \cap \ZZ^{d-2} = n\mathrm{St}_{d-2}^\ast \cap \ZZ^{d-2}.
\end{align}
Since $\bigsqcup_{-n \leq m \leq n}A_m \cap \ZZ^{d-2} \subset n\mathrm{St}_{d-2}^\ast \cap \ZZ^{d-2}$ is clear, we will prove $\bigsqcup_{-n \leq m \leq n}A_m \cap \ZZ^{d-2} \supset n\mathrm{St}_{d-2}^\ast \cap \ZZ^{d-2}$. For $x = (x_1, \ldots, x_{d-2}) \in n\mathrm{St}_{d-2}^\ast \cap \ZZ^{d-2}$, we define the value $p$ as follows:
$$p = \max{\{x_i + \cdots + x_{d-2} \mid 1 \leq i \leq d-2\}}.$$
Then, since $-n \leq p \leq n$ holds, and $x \in A_p \cap \ZZ^{d-2}$ follows from the definition of $A_m$. Therefore, we get (3.5).

From (3.4) and (3.5), we have the following equation:
\begin{align}
  \left|\bigsqcup_{-n \leq m \leq n}(A_m \times \Delta_m) \cap \ZZ^d \right| = \frac{1}{2}n(n+1)E_{\mathrm{St}_{d-2}^\ast}(n).
\end{align}

\smallskip

\noindent
\underline{The second step}: Next, we prove the following equality:
\begin{align}
  n\Ac \cap \ZZ^d = \bigsqcup_{-n \leq m \leq n}(A_m \times \Delta_m) \cap \ZZ^d.
\end{align}
Let $x = (x_1, \ldots, x_d) \in n\Ac \cap \ZZ^d$. Then, $x_d \leq n$ holds, and since $(x_1, \ldots, x_{d-2}) \in n\mathrm{St}_{d-2}^\ast \cap \ZZ^{d-2}$, that is, $(x_1, \ldots, x_{d-2}) \in \bigsqcup_{-n \leq m \leq n}A_m \cap \ZZ^{d-2}$ holds by (3.5), there exists an integer $-n \leq k \leq n$ such that $(x_1, \ldots, x_{d-2}) \in A_k$. 

Now, suppose $x_{d-1} > n-\max\{k, 0\}$. From the first step, since there exists $1 \leq i' \leq d-2$ such that $x_{i'} + \cdots + x_{d-2} = \max\{k, 0\}$, we have $x_{i'} + \cdots + x_{d-1} > n$, implying $x \notin n(\mathrm{St}_{d-1}^\ast \times [-1, 1]) \cap \ZZ^d$, which contradicts $x \in n\Ac$. Therefore, we conclude that $x_{d-1} \leq n-\max\{k, 0\}$. 

Next, suppose $x_{d-1}+x_d \leq n-\max\{k, 0\}$. From $(x_1, \ldots, x_{d-2}) \in n\mathrm{St}_{d-2}^\ast \cap \ZZ^{d-2}$, we know that $x_{i} + \cdots + x_{d-2} \leq \max\{k, 0\}$ for all $1 \leq i \leq d-2$. Consequently, $x_{i} + \cdots + x_d \leq n$ for all $1 \leq i \leq d$, implying $x \in n\mathrm{St}_d^\ast\cap \ZZ^d$, which contradicts $x \in n\mathrm{St}_d^\ast \cap \ZZ^d$. Thus, we deduce $x_{d-1}+x_d > n-\max\{k, 0\}$.
Therefore, we have $x \in \bigsqcup_{-n \leq m \leq n}(A_m \times \Delta_m) \cap \ZZ^d$.

Now, let $x = (x_1, \ldots, x_d) \in \bigsqcup_{-n \leq m \leq n}(A_m \times \Delta_m) \cap \ZZ^d$. Then there exists $-n \leq k \leq n$ such that $(x_1, \ldots, x_d) \in A_k \times \Delta_k$. Since $x_i + \cdots + x_{d-2} \leq \max\{k, 0\}$ holds for all $1 \leq i \leq d-2$ by $(x_1, \ldots, x_{d-2}) \in A_k$, we have $x_i + \cdots + x_{d-1} \leq n$. 
Additionally, since there exists $1 \leq i' \leq d-2$ such that $x_{i'} + \cdots + x_{d-2} = \max\{k, 0\}$ and $x_{d-1} + x_d > n-\max\{k, 0\}$ by $(x_{d-1}, x_d) \in \Delta_k$, we conclude $x_{i'} + \cdots + x_d > n$. Therefore, $x \in n\Ac \cap \ZZ^d$.

\smallskip

\noindent
\underline{The third step}: From (3.3) and (3.7), we have the following equation:
$$\left| \bigsqcup_{-n \leq m \leq n}(A_m \times \Delta_m) \cap \ZZ^d \right|= (2n+1)E_{\mathrm{St}_{d-1}^\ast}(n) - E_{\mathrm{St}_d^\ast}(n).$$
From this equation and (3.6), we get (3.1).
\end{proof}

By using Theorem 3.1, we can prove Theorem 1.3 without explicitly determining $E_{\mathrm{St}_d^\ast}(n)$.
\begin{proof}[Proof of Theorem~\ref{main1}]
Transform $(3.1)$ into the following form$:$
$$E_{\mathrm{St}_d^\ast}(n) - \frac{1}{2}nE_{\mathrm{St}_{d-1}^\ast}(n) = \frac{1}{2}nE_{\mathrm{St}_{d-1}^\ast}(n) + (n+1)\left(E_{\mathrm{St}_{d-1}^\ast}(n) - \frac{1}{2}nE_{\mathrm{St}_{d-2}^\ast}(n)\right).$$
By the recurrence relation, we observe that if both $E_{\mathrm{St}_{d-1}^\ast}(n) - \frac{1}{2}nE_{\mathrm{St}_{d-2}^\ast}(n)$ and $\frac{1}{2}nE_{\mathrm{St}_{d-1}^\ast}(n)$ are magic positive, so are $E_{\mathrm{St}_d^\ast}(n) - \frac{1}{2}nE_{\mathrm{St}_{d-1}^\ast}(n)$ and $E_{\mathrm{St}_d^\ast}(n)$. In other words, it suffices to show that $E_{\mathrm{St}_1^\ast}(n) $ and $E_{\mathrm{St}_1^\ast}(n) - \frac{1}{2}nE_{\mathrm{St}_0^\ast}(n)$ are magic positive. 
In fact, we have $E_{\mathrm{St}_0^\ast}(n) = 1, E_{\mathrm{St}_1^\ast}(n) = 2n+1 = (n+1) + n$ and $E_{\mathrm{St}_1^\ast}(n) - \frac{1}{2}nE_{\mathrm{St}_0^\ast}(n) = (n+1) + \frac{1}{2}n$, as required. Therefore, $E_{\mathrm{St}_d^\ast}(n)$ is magic positive.
\end{proof}

The following table shows examples of the Ehrhart polynomials of the Stasheff polytope in dimensions 2, 3, 4, and 5, where it can be confirmed that they are magic positive.

\begin{table}[h]
  \centering
  \renewcommand{\arraystretch}{1.3}
  \begin{tabular}{|l|}
     \hline
      $\begin{array}{rcl}
      E_{\mathrm{St}_2}(n) & = & \frac{7}{2}n^2 + \frac{7}{2}n + 1 \\
        & = & (n+1)^2 + \frac{3}{2}(n+1)n + n^2
      \end{array}$ \\ \hline
      $\begin{array}{rcl}
      E_{\mathrm{St}_3}(n) & = & 6n^3 + 9n^2 + 5n + 1 \\
        & = & (n+1)^3 + 2(n+1)^2n + 2(n+1)n^2 + n^3
      \end{array}$ \\ \hline
      $\begin{array}{rcl}
      E_{\mathrm{St}_4}(n) & = & \frac{41}{4}n^4 + \frac{41}{2}n^3 + \frac{67}{4}n^2 + \frac{13}{2}n + 1 \\
        & = & (n+1)^4 + \frac{5}{2}(n+1)^3n + \frac{13}{4}(n+1)^2n^2 + \frac{5}{2}(n+1)n^3 + n^4
      \end{array}$ \\ \hline
      $\begin{array}{rcl}
      E_{\mathrm{St}_5}(n) & = & \frac{35}{2}n^5 + \frac{175}{4}n^4 + 47n^3 + \frac{107}{4}n^2 + 8n + 1 \\
        & = & (n+1)^5 + 3(n+1)^4n + \frac{19}{4}(n+1)^3n^2 + \frac{19}{4}(n+1)^2n^3 + 3(n+1)n^4 + n^5
      \end{array}$ \\ \hline
  \end{tabular}
\end{table}

\begin{rem}
From \cite[Theorem 2.2]{RS91}, the coefficients of the Ehrhart polynomial of a zonotope are all integers. On the other hand, since the coefficients of the Ehrhart polynomial of a Stasheff polytope are rational numbers, the Stasheff polytope is not zonotope.
\end{rem}

\section{Proof of Theorem~\ref{main2}}\label{sec:main2}
In this section, we provide the background that led us to consider Theorem~\ref{main2}, followed by a proof of Theorem~\ref{main2}. We begin by examining the cases of tree $T_d$ and complete graph $K_d$.
\begin{prop}
The Ehrhart polynomials of the dual of the symmetric edge polytopes of trees and complete graphs are magic positive.
\end{prop}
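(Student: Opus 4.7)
The plan is to reduce both claims to the examples already computed in Section~\ref{sec:pre}, by exhibiting unimodular equivalences of lattice polytopes. Since a unimodular equivalence preserves Ehrhart polynomials, it preserves magic positivity; thus it suffices to match $P_{T_d}^{\ast}$ with the dual of a cross polytope and $P_{K_d}^{\ast}$ with the dual of a root polytope of type $A$, and then quote Examples~\ref{cross} and~\ref{typeA}.

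For a tree $T_d$ on the vertex set $[d]$, first I would fix a root (say vertex $1$) and orient each of the $d-1$ edges away from the root, so that every non-root vertex $w$ has a unique parent $p(w)$. The $(d-1)$ edge vectors $\eb_w - \eb_{p(w)}$ form a $\ZZ$-basis of the lattice $\ZZ^d \cap \{x_1 + \cdots + x_d = 0\}$, which is exactly the affine lattice spanned by $P_{T_d}$. The linear map sending $\eb_w - \eb_{p(w)}$ to the standard basis vector $\eb_w \in \ZZ^{d-1}$ is therefore a lattice isomorphism, and it takes the generating set $\{\pm(\eb_w - \eb_{p(w)})\}$ of $P_{T_d}$ bijectively onto the generating set $\{\pm \eb_w\}$ of $\mathrm{Cr}_{d-1}$. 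Dualizing, $P_{T_d}^{\ast} \cong \mathrm{Cr}_{d-1}^{\ast} = [-1,1]^{d-1}$, and magic positivity follows from Example~\ref{cross}.

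For the complete graph $K_{d+1}$, the polytope $P_{K_{d+1}}$ is the convex hull of all $\pm(\eb_i - \eb_j)$ with $1 \leq i < j \leq d+1$, i.e.\ of the root system of type $A_d$. I would use the standard identification of this root system with the vectors defining $\mathbf{A}_d$ in Example~\ref{typeA}: the simple roots $\eb_i - \eb_{i+1}$ are sent to the basis vectors $\eb_i \in \ZZ^d$, under which a positive root $\eb_i - \eb_{j+1} = \sum_{k=i}^{j}(\eb_k - \eb_{k+1})$ maps to $\eb_i + \eb_{i+1} + \cdots + \eb_j$. This is a unimodular isomorphism of the affine lattices of $P_{K_{d+1}}$ and $\mathbf{A}_d$, taking vertices to vertices with matching signs. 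Hence $E_{P_{K_{d+1}}^{\ast}}(n) = E_{\mathbf{A}_d^{\ast}}(n) = \sum_{i=0}^d (n+1)^{d-i} n^i$, and magic positivity is again immediate from Example~\ref{typeA}.

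The only real obstacle is bookkeeping rather than conceptual: one must verify that the linear maps above actually restrict to lattice isomorphisms between the ambient lattices of the two polytopes (not merely $\RR$-linear isomorphisms of their affine hulls) and that the map sends the full set of generators of the source polytope onto the full set of generators of the target polytope, with signs preserved. Once these checks are made, both statements fall out without any further Ehrhart computation.
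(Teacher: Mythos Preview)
Your proposal is correct and takes essentially the same approach as the paper: the paper simply cites a reference for the unimodular equivalence $P_{T_{d+1}}\cong\mathrm{Cr}_d$ and asserts $P_{K_{d+1}}\cong\mathbf{A}_d$, then invokes Examples~\ref{cross} and~\ref{typeA}, whereas you spell out the explicit lattice isomorphisms. The only additional care needed is what you already flag at the end, namely checking that the maps are genuine lattice isomorphisms and carry the full vertex sets onto each other; once that is done the two arguments are identical.
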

\begin{proof}
From {\cite[Proposition 4.6]{MHNOH}}, $P_{T_{d+1}}$ and $\mathrm{Cr}_d$ are unimodularly equivalent. Therefore, by Example 3.1, the Ehrhart polynomials of $P_{T_{d+1}}^\ast$ are magic positive. 

Moreover, $P_{K_{d+1}}$ and ${\bf A}_d$ are unimodularly equivalent. Therefore, by Example 3.2, the Ehrhart polynomials of $P_{K_{d+1}}^\ast$ are magic positive.
\end{proof}

For any connected graph $G$ with $d$ vertices, the following inclusions hold:
$$P_{K_d}^\ast \subset P_{G}^\ast \subset P_{T_d}^\ast.$$
From these relations and Proposition 4.1, the following question naturally arises.
\begin{q}
Is the Ehrhart polynomial of the dual polytopes of symmetric edge polytopes of any connected graph magic positive?
\end{q}
We expected this question to hold, but through computational experiments, we found several counterexamples. First, we present a counterexample in the case of a complete bipartite graph. For instance, $E_{P_{K_{3,7}}^\ast}$ is not magic positive. Specifically, $E_{P_{K_{3,7}}^\ast}(n)$ is computed as follows:
\begin{align*}
E_{P_{K_{3,7}}^\ast}(n) &= \frac{128}{3}n^9 + 192n^8 + 448n^7 + 672n^6 + \frac{3528}{5}n^5 + 532n^4 + \frac{820}{3}n^3 + 86n^2 + \frac{72}{5}n + 1 \\
&=(n+1)^9 + \frac{27}{5}(n+1)^8n + \frac{34}{5}(n+1)^7n^2 -\frac{142}{15}(n+1)^6n^3 + \frac{88}{5}(n+1)^5n^4\\
& + \frac{88}{5}(n+1)^4n^5 -\frac{142}{15}(n+1)^3n^6 + \frac{34}{5}(n+1)^2n^7 + \frac{27}{5}(n+1)n^8 + n^9.
\end{align*}

The following table marks $\circ$ when $E_{P_{K_{m,n}}^\ast}(n)$ is magic positive and $\times$ when it is not.

\begin{table}[H]
  \centering
  \begin{tabular}{|c|c|c|c|c|c|c|c|c|c|c|} 
      \hline
      & 2 & 3 & 4 & 5 & 6 & 7 & 8 & 9 & 10 & 11\\ 
      \hline
      2 & $\circ$ & $\circ$ & $\circ$ & $\circ$ & $\circ$ & $\circ$ & $\circ$ & $\times$ & $\times$ & $\times$ \\ 
      \hline
      3 & $\circ$ & $\circ$ & $\circ$ & $\circ$ & $\circ$ & $\times$ & $\times$ & $\times$ & $\times$  &\\ 
      \hline
      4 & $\circ$ & $\circ$ & $\circ$ & $\circ$ & $\times$ & $\times$ & $\times$ & $\times$ &  & \\ 
      \hline
      5 & $\circ$ & $\circ$ & $\circ$ & $\times$ & $\times$ & $\times$ & $\times$ &  &  & \\ 
      \hline
      6 & $\circ$ & $\circ$ & $\times$ & $\times$ & $\times$ & $\times$ &  &  &  & \\ 
      \hline
      7 & $\circ$ & $\times$ & $\times$ & $\times$ & $\times$ &  &  &  &  & \\ 
      \hline
      8 & $\circ$ & $\times$ & $\times$ & $\times$ &  &  &  &  &  & \\ 
      \hline
      9 & $\times$ & $\times$ & $\times$ &  &  &  &  &  &  & \\ 
      \hline
      10 & $\times$ & $\times$ &  &  &  &  &  &  &  &\\ 
      \hline
      11 & $\times$ &  &  &  &  &  &  &  &  &\\ 
      \hline
   \end{tabular}
\end{table}

Except for the case of $K_{2,8}$, when the number of vertices in a graph exceeds 10, the Ehrhart polynomial of the dual of the symmetric edge polytopes tends not to be magic positive. However, the $h^\ast$-polynomials of the polytopes with Ehrhart polynomials that are not magic positive, as shown in the table above, are real rooted.

Second, we provide a counterexample for the case of removing an edge from a complete graph. For example, $E_{P_{K_{10} \backslash \{e\}}^\ast}(n)$ is not magic positive where $e$ is an edge of $K_{10}$. $E_{P_{K_{10} \backslash \{e\}}^\ast}(n)$ is calculated as follows:
\begin{align*}
E_{P_{K_{10} \backslash \{e\}}^\ast}(n) &= \frac{92}{9}n^9 + 46n^8 + \frac{364}{3}n^7 + 210n^6 + \frac{3766}{15}n^5 + 210n^4 + \frac{1084}{9}n^3 + 45n^2 + \frac{149}{15}n + 1 \\
&= (n+1)^9 + \frac{14}{15}(n+1)^8n + \frac{23}{15}(n+1)^7n^2 -\frac{19}{45}(n+1)^6n^3 + \frac{31}{15}(n+1)^5n^4 \\
&+ \frac{31}{15}(n+1)^4n^5 -\frac{19}{45}(n+1)^3n^6 + \frac{23}{15}(n+1)^2n^7 + \frac{14}{15}(n+1)n^8 + n^9.
\end{align*}

The counterexample shown at the beginning demonstrates that there exists an Ehrhart polynomial of the dual of the symmetric edge polytope of a graph close to a complete graph that is not magic positive. So, what happens to the magic positivity of the Ehrhart polynomial of the dual of the symmetric edge polytope for graphs that are close to trees.

\begin{q}
Is the Ehrhart polynomials of dual polytopes of symmetric edge polytopes of cycle magic positive?
\end{q}
We computed the Ehrhart polynomial of the dual of the symmetric edge polytopes of cycles and partially resolved its magic positivity.
\begin{prop}
Let $C_d$ denote a cycle of length $d \geq 2$. Then we have
\begin{align}
  E_{P_{C_{d+1}}^\ast} = \sum_{i=0}^{\lfloor \frac{d}{2} \rfloor} (-1)^i\binom{(d+1-2i)n + (d-i)}{d}\binom{d+1}{i}.
\end{align}
\end{prop}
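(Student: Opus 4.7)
The strategy is to reduce $|n P_{C_{d+1}}^{\ast} \cap \ZZ^{d}|$ to an enumeration of integer points in a box with one additional linear constraint, and to resolve that enumeration via inclusion--exclusion and Pascal's identity.

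I would first realize $P_{C_{d+1}}^{\ast}$ as a unimodular image of
\[
P \;=\; \{z \in \RR^{d} : -1 \le z_{i} \le 1 \text{ for } 1 \le i \le d,\ -1 \le z_{1} + \cdots + z_{d} \le 1\}.
\]
The ambient lattice of $P_{C_{d+1}}$ is $\{x \in \ZZ^{d+1} : \sum_{i} x_{i} = 0\}$, so the dual lattice is naturally $\ZZ^{d+1} / \ZZ(1, \dots, 1)$; the map $[y] \mapsto (y_{1} - y_{2}, \dots, y_{d} - y_{d+1})$ gives a lattice isomorphism with $\ZZ^{d}$, under which the $d+1$ cyclic edge inequalities $|y_{i} - y_{i+1}| \le 1$ (including the wrap-around $|y_{1} - y_{d+1}| = |z_{1} + \cdots + z_{d}| \le 1$) become precisely the defining inequalities of $P$. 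The shift $w_{i} := z_{i} + n$ then identifies $|nP \cap \ZZ^{d}|$ with the number of $w \in \ZZ^{d}$ such that $0 \le w_{i} \le 2n$ for each $i$ and $(d-1)n \le w_{1} + \cdots + w_{d} \le (d+1)n$. Writing
\[
f(m) \;:=\; |\{w \in \ZZ^{d}_{\ge 0} : w_{i} \le 2n \text{ for all } i,\ w_{1} + \cdots + w_{d} \le m\}|,
\]
this count equals $f((d+1)n) - f((d-1)n - 1)$, and a standard inclusion--exclusion over which coordinates violate $w_{i} \le 2n$ yields
\[
f(m) \;=\; \sum_{i=0}^{d} (-1)^{i} \binom{d}{i} \binom{m - i(2n+1) + d}{d},
\]
with the combinatorial convention that $\binom{K}{d} = 0$ whenever $K < d$.

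I would then substitute the two values of $m$, reindex the sum for $f((d-1)n - 1)$ by $i \mapsto i - 1$ so that both sums share the common general term $\binom{(d+1-2i)n + (d-i)}{d}$, and merge them using Pascal's identity $\binom{d}{i} + \binom{d}{i-1} = \binom{d+1}{i}$. This produces
\[
E_{P_{C_{d+1}}^{\ast}}(n) \;=\; \sum_{i=0}^{d+1} (-1)^{i} \binom{d+1}{i} \binom{(d+1-2i)n + (d-i)}{d},
\]
and a short case analysis on the parity of $d$ shows that for every $i \ge \lfloor d/2 \rfloor + 1$ the coefficient $d+1-2i$ of $n$ is non-positive, so $(d+1-2i)n + (d-i) < d$ for all $n \ge 1$ and the combinatorial binomial vanishes; the sum therefore truncates to $0 \le i \le \lfloor d/2 \rfloor$ as claimed.

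The step requiring the most care is the last one: one must keep the combinatorial (and not the polynomial) interpretation of $\binom{K}{d}$ throughout, because the polynomial extension does not vanish for sufficiently negative $K$, and it is precisely this combinatorial vanishing that yields the clean truncation at $\lfloor d/2 \rfloor$. The remaining algebra is routine.
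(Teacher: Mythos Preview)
Your argument is correct. Both you and the paper begin by identifying $P_{C_{d+1}}^{\ast}$ (up to unimodular equivalence) with the slab $\{x\in\RR^{d}:|x_i|\le 1,\ |x_1+\cdots+x_d|\le 1\}$, but from there the two proofs diverge. The paper does not count lattice points directly; it invokes a ready-made formula from \cite[Proposition~2.2 and Theorem~2.5]{FM} for the Ehrhart polynomial of such a slab, which already produces the sum $\sum_i(-1)^i\binom{d+1}{i}\binom{(d+1-2i)n+(d-i)}{d}$, and the truncation at $\lfloor d/2\rfloor$ falls out of the support condition on the auxiliary function $\rho_{\mathbf c,i}$. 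You instead give a self-contained derivation: shift to the box $[0,2n]^{d}$, express the count as $f((d+1)n)-f((d-1)n-1)$, expand $f$ by inclusion--exclusion, reindex, and merge via Pascal's rule. Your route is longer but entirely elementary, requiring no outside citation, and it makes transparent why the binomial $\binom{d+1}{i}$ (rather than $\binom{d}{i}$) appears. The paper's route is terse and leans on the cited machinery. One point worth tightening in your write-up: after truncation, each surviving argument $(d+1-2i)n+(d-i)$ is a nonnegative integer for $n\ge 1$, so the combinatorial and polynomial readings of $\binom{\,\cdot\,}{d}$ coincide there; this is what lets you pass from the pointwise equality you establish to the polynomial identity the paper actually uses downstream.
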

\begin{proof}
By the definition of symmetric edge polytope, $P_{C_{d+1}}$ is the convex hull of $\{\pm(e_i-e_{i+1}) \mid 1 \leq i \leq d-1\} \cup \{\pm(e_d-e_1)\}$. Since $P_{C_{d+1}}$ and the convex hull of $\{\pm e_i \mid 1 \leq i \leq d-1\} \cup \{\pm (e_1 + \cdots + e_d)\}$ are unimodular equivalent, we obtain the following equality:
$$P_{C_{d+1}}^\ast = \{(x_1, \ldots, x_d) \in \RR^d \mid -1 \leq x_i \leq 1, -1 \leq x_1 + \cdots + x_d \leq 1 (1 \leq i \leq d)\}.$$
Using \cite[Proposition 2.2 and Theorem 2.5]{FM}, we obtain 
$$E_{P_{C_{d+1}}^\ast}(n) = \sum_{i=0}^{d}(-1)^i\sum_{v=0}^{d}\binom{(d+1-v)n+(d-i)}{d}\rho_{\mathbf{c}, i}(v)$$
where $\mathbf{c} = (2, \ldots, 2) \in \ZZ_{\geq 0}^{d+1}$and
$$\rho_{\mathbf{c},i}(v) := \#\left\{ I\in \binom{[d+1]}{i} : \sum_{j \in I} c_j = v\right\}=\begin{cases}
\binom{d+1}{i} &\text{if $v = 2i$}, \\
0 &\text{otherwise}. 
\end{cases}
$$
Therefore, we have (4.1).
\end{proof}

By using Proposition 4.4, we can prove Theorem 1.4.
\begin{proof}[Proof of Theorem~\ref{main2}]
Since the symmetric edge polytope is reflexive, from Proposition 2.4, it suffices to show that $a_0, a_1, a_2$ are positive. For $0 \leq i \leq \lfloor \frac{d}{2} \rfloor$, let
$$
B _i =
\begin{pmatrix}
  1-i & 2-i & \cdots & d-1-i & d-i\\
  d-i & d-1-i & \cdots & 2-i & 1-i
\end{pmatrix}
$$
be the $2 \times d$-matrix and for $I \subset [d] = \{1,2,\ldots,d\}$, let
$$B_i^I = (-1)^i\prod_{j \in I}b_{i2j}\prod_{j \in [d] \backslash I}b_{i1j}\binom{d+1}{i}, C_{I} = \sum_{i=1}^{\lfloor \frac{d}{2} \rfloor}B_i^I$$
where $b_{ijk}$ denotes the $(j,k)$-entry of the matrix $B_i$.
Since
\[
\binom{(d+1-2i)n + (d-i)}{d} = \frac{1}{d!}\prod_{j=1}^{d}((j-i)n + (d+1-j-i)(n+1)),
\]
we have $a_i = \sum_{I \in \binom{[d]}{i}}C_I$. From this equation, it suffices to show that $C_I \geq 0$ for all $I \in \binom{[d]}{i}$. Note that $B_0^I > 0$, and for all $i \in \{1, \ldots, \lfloor \frac{d}{2} \rfloor\}$, if $i \notin I$ or $d+1-i \in I$, then $B_i^I = 0$.

\noindent
Obviously $a_0 = 1$, so we prove that $a_1$ is positive. For $1 \leq i \leq d$, let $I = \{i\}$. 

\noindent
If $\lfloor \frac{d}{2} \rfloor < i \leq d$, then $B_j^{\{i\}} = 0$ for all $1 \leq j \leq \lfloor \frac{d}{2} \rfloor$. Therefore $C_I = B_0^I > 0$. 

\noindent
If $1 \leq i \leq \lfloor \frac{d}{2} \rfloor$, then
\begin{align*}
C_{I} &= B_0^I + B_{i}^I \\
&= \frac{d+1-i}{i} - \frac{1}{d!}\binom{d+1}{i}(i-1)!(d-i)!(d+1-2i) \\
&= \frac{d+1-i}{i} - (d+1)\left(\frac{1}{i} - \frac{1}{d+1-i}\right) \\
&= \frac{d+1}{d+1-i} - 1 = \frac{i}{d+1-i} > 0.
\end{align*}
Therefore $a_1$ is positive.

Next we prove that $a_2$ is positive in three cases. For $1 \leq i < j \leq d$, let $I = \{i, j\}$.
\smallskip

\noindent
\underline{The first case}: If $\lfloor \frac{d}{2} \rfloor \leq i < j \leq d$ for all $1 \leq k \leq \lfloor \frac{d}{2} \rfloor$. Therefore $C_I = B_0^I > 0$

\smallskip

\noindent
\underline{The second case}: If $1 \leq i < \lfloor \frac{d}{2} \rfloor < j \leq d$, then $C_I = \frac{d+1-j}{j}B_0^{\{i\}} + \frac{d+1-j-i}{j-i}B_i^{\{i\}}$. Since
$$\frac{d+1-j}{j} - \frac{d+1-j-i}{j-i} = \frac{i(2j-d-1)}{j(j-i)} \geq 0,$$
$B_0^{\{i\}} + B_i^{\{i\}} > 0$ and $B_0^I > 0$, we obtain $C_I > 0$
\smallskip

\noindent
\underline{The third case}: If $1 \leq i < j \leq \lfloor \frac{d}{2} \rfloor$, then
\begin{align*}
C_I &= B_0^I + B_i^I + B_j^I \\
&= \frac{(d+1-i)(d+1-j)}{ij} - \frac{d+1}{(d+1-i)i}\frac{d+1-i-j}{j-i}(d+1-2i) \\
&- \frac{d+1}{(d+1-j)j}\frac{d+1-i-j}{i-j}(d+1-2j) \\
&= \frac{(d+1-i)(d+1-j)}{ij} - \frac{(d+1)(d+1-i-j)}{j-i}\left(\frac{d+1-2i}{(d+1-i)i} + \frac{d+1-2j}{(d+1-j)j}\right) \\
&= 1 + \frac{(d+1)(d+1-i-j)}{ij} - (d+1)(d+1-i-j)\left(\frac{1}{ij} + \frac{1}{(d+1-i)(d+1-j)}\right) \\
&= 1 - \frac{(d+1)(d+1-i-j)}{(d+1-i)(d+1-j)} = \frac{ij}{(d+1-i)(d+1-j)}.
\end{align*}
Since $ij \geq 2$ and $i, j < d + 1$, $C_I > 0$. Therefore $a_2$ is positive.
\end{proof}

\begin{rem}
This method can't be used for coefficient of $(n+1)^{d-3}n^3$. For example, if $d=6, I = \{1, 2, 4\}$, then
$$C_I = B_0^I + B_1^I+ B_2^I = (6\times5\times3)^2 - 7\times(5\times4\times2)^2 + 21\times(4\times3)^2 = -76$$
\end{rem}

\begin{rem}
By computational experiments, $E_{C_{d+1}^\ast}$ is magic positive for all $d \leq 500$. 
\end{rem}

\medskip

\bibliographystyle{plain} 
\bibliography{ref}

\begin{thebibliography}{10}

\bibitem{BJM}
Matthias Beck, Katharina Jochemko, and Emily McCullough.
\newblock {$h^\ast$}-polynomials of zonotopes.
\newblock {\em Trans. Amer. Math. Soc.}, 371(3):2021--2042, 2019.

\bibitem{CM}
Paola Cellini and Mario Marietti.
\newblock Polar root polytopes that are zonotopes.
\newblock {\em S\'em. Lothar. Combin.}, 73:Art. B73a, 10, [2014--2016].

\bibitem{E}
Eug\`ene Ehrhart.
\newblock Sur les poly\`edres rationnels homoth\'etiques \`a{} {$n$}\ dimensions.
\newblock {\em C. R. Acad. Sci. Paris}, 254:616--618, 1962.

\bibitem{FH}
Luis Ferroni and Akihiro Higashitani.
\newblock Examples and counterexamples in {E}hrhart theory, 2024.

\bibitem{FM}
Luis Ferroni and Daniel McGinnis.
\newblock Lattice points in slices of prisms.
\newblock {\em Canadian Journal of Mathematics}, 2024.

\bibitem{FAM}
Luis Ferroni and Alejandro~H. Morales.
\newblock $h^\ast$-polynomial of {P}itman-{S}tanley polytopes, in preparation.

\bibitem{FZ}
Sergey Fomin and Andrei Zelevinsky.
\newblock {$Y$}-systems and generalized associahedra.
\newblock {\em Ann. of Math. (2)}, 158(3):977--1018, 2003.

\bibitem{H}
Takayuki Hibi.
\newblock Dual polytopes of rational convex polytopes.
\newblock {\em Combinatorica}, 12(2):237--240, 1992.

\bibitem{HJM}
Akihiro Higashitani, Katharina Jochemko, and Mateusz Micha{\l}ek.
\newblock Arithmetic aspects of symmetric edge polytopes.
\newblock {\em Mathematika}, 65(3):763--784, 2019.

\bibitem{HKM}
Akihiro Higashitani, Mario Kummer, and Mateusz Micha{\l}ek.
\newblock Interlacing {E}hrhart polynomials of reflexive polytopes.
\newblock {\em Selecta Math. (N.S.)}, 23(4):2977--2998, 2017.

\bibitem{HY}
Akihiro Higashitani and Yumi Yamada.
\newblock The distribution of roots of {E}hrhart polynomials for the dual of root polytopes of type {$C$}.
\newblock {\em Graphs Combin.}, 39(4):Paper No. 83, 13, 2023.

\bibitem{MHNOH}
Tetsushi Matsui, Akihiro Higashitani, Yuuki Nagazawa, Hidefumi Ohsugi, and Takayuki Hibi.
\newblock Roots of {E}hrhart polynomials arising from graphs.
\newblock {\em J. Algebraic Combin.}, 34(4):721--749, 2011.

\bibitem{OH14}
Hidefumi Ohsugi and Takayuki Hibi.
\newblock Centrally symmetric configurations of integer matrices.
\newblock {\em Nagoya Mathematical Journal}, 216:153--170, 2014.

\bibitem{RS80}
Richard~P. Stanley.
\newblock Decompositions of rational convex polytopes.
\newblock {\em Ann. Discrete Math.}, 6:333--342, 1980.

\bibitem{RS89}
Richard~P. Stanley.
\newblock Log-concave and unimodal sequences in algebra, combinatorics, and geometry.
\newblock volume 576 of {\em Ann. New York Acad. Sci.}, pages 500--535. New York Acad. Sci., New York, 1989.

\bibitem{RS91}
Richard~P. Stanley.
\newblock A zonotope associated with graphical degree sequences.
\newblock In {\em Applied geometry and discrete mathematics}, volume~4 of {\em DIMACS Ser. Discrete Math. Theoret. Comput. Sci.}, pages 555--570. Amer. Math. Soc., Providence, RI, 1991.

\end{thebibliography}
\end{document}